\renewcommand{\mathbb}{\mathds}
\DeclareMathAlphabet{\mathsc}{U}{rsfs}{m}{n}
\newcommand{\nopp}{}
\theoremstyle{plain}
\newtheorem{thm}{Theorem}[section]
\newtheorem{prp}[thm]{Proposition} 
\newtheorem{lem}[thm]{Lemma} 
\theoremstyle{definition}
\theoremstyle{remark} 
\newtheorem{rmk}[thm]{Remark}
\newtheorem{exa}[thm]{Example}
\newcommand{\mm}{\mathfrak{m}}
\newcommand{\pp}{\mathfrak{p}}
\renewcommand{\AA}{\mathbb{A}}
\newcommand{\CC}{\mathbb{C}}
\newcommand{\cA}{\mathcal{A}}
\newcommand{\cF}{\mathcal{F}}
\newcommand{\cG}{\mathcal{G}}
\renewcommand{\O}{\mathcal{O}}
\newcommand{\cI}{\mathcal{I}}
\newcommand{\cJ}{\mathcal{J}}
\newcommand{\cM}{\mathcal{M}}
\newcommand{\cS}{\mathcal{S}}
\newcommand{\ideal}[1]{{\left\langle#1\right\rangle}}
\newcommand{\set}[1]{{\left\{#1\right\}}}
\newcommand{\xmid}{\;\middle|\;}
\newcommand{\into}{\hookrightarrow}
\newcommand{\onto}{\twoheadrightarrow}
\newcommand{\p}{\partial}
\DeclareMathOperator{\Ann}{Ann}
\DeclareMathOperator{\ch}{char}
\DeclareMathOperator{\Hom}{Hom}
\DeclareMathOperator{\D}{D}
\DeclareMathOperator{\Der}{Der}
\DeclareMathOperator{\soc}{soc}
\DeclareMathOperator{\Sing}{Sing}
\DeclareMathOperator{\Spec}{Spec}
\DeclareMathOperator{\ST}{ST}
\DeclareMathOperator{\Supp}{Supp}
\DeclareMathOperator{\sAnn}{{\mathcal Ann}}
\DeclareMathOperator{\sDer}{{\mathcal Der}}
\begin{document}
%%%%%%%%%%%%%%%%%%%%%%%%%%%%%%%%%%%%%%%%%%%%%%%%%%%%%%%%%%%%%%%%%%%%%%%%%%%%%%%%

\title{A Saito criterion for holonomic divisors}

\author[R.~Epure]{Raul Epure}
\address{R.~Epure\\
Department of Mathematics\\
TU Kaiserslautern\\
67663 Kaiserslautern\\
Germany}
\email{\href{mailto:epure@mathematik.uni-kl.de}{epure@mathematik.uni-kl.de}}

\author[M.~Schulze]{Mathias Schulze}
\address{M.~Schulze\\
Department of Mathematics\\
TU Kaiserslautern\\
67663 Kaiserslautern\\
Germany}
\email{\href{mailto:mschulze@mathematik.uni-kl.de}{mschulze@mathematik.uni-kl.de}}

%\date{\today}

\subjclass[2010]{Primary 32S65; Secondary 13H10, 13N15}
% 32-xx Several complex variables and analytic spaces
% 32Sxx Singularities
% 32S22 Relations with arrangements of hyperplanes
% 32S65 Singularities of holomorphic vector fields and foliations
% 16-xx Associative rings and algebras
% 16Wxx Rings and algebras with additional structure
% 16W25 Derivations, actions of Lie algebras
% 13-xx Commutative rings and algebras
% 13Hxx Local rings and semilocal rings
% 13H10 Special types (Cohen-Macaulay, Gorenstein, Buchsbaum, etc.)
% 13Nxx Differential algebra
% 13N15 Derivations

\keywords{Free divisor, hyperplane arrangement, complete intersection, derivation, Bertini theorem}

\begin{abstract}
We show that a holonomic divisor is free if and only if applying all logarithmic derivations to a generic function with isolated critical point yields a complete intersection Artin algebra.
\end{abstract}

\maketitle

\numberwithin{equation}{section}

%%%%%%%%%%%%%%%%%%%%%%%%%%%%%%%%%%%%%%%%%%%%%%%%%%%%%%%%%%%%%%%%%%%%%%%%%%%%%%%%
\section{Introduction}\label{11}
%%%%%%%%%%%%%%%%%%%%%%%%%%%%%%%%%%%%%%%%%%%%%%%%%%%%%%%%%%%%%%%%%%%%%%%%%%%%%%%%

Abe, Horiguchi, Masuda, Murai and Sato (see \cite{AHMMS16}) consider lower ideals in positive roots of a semisimple complex linear algebraic group.
Associated with such an ideal $I$ is a free ideal arrangement $\cA_I$ of $|I|$ hyperplanes.
Applying all its logarithmic derivations to the quadratic form of the invariant inner product defines a graded complete intersection Artin algebra with socle degree $|I|$.
Terao raised the question about a possible converse of such a construction after a talk of Abe at the 2016 Summer Conference on Hyperplane Arrangements in Sapporo.
The following result gives an answer for holonomic divisors which includes the case of complex hyperplane arrangements.

%%%%%%%%%%%%%%%%%%%%%%%%%%%%%%%%%%%%%%%%%%%%%%%%%%%%%%%%%%%%%%%%%%%%%%%%%%%%%%%%

Consider a reduced hypersurface $D$ in a complex manifold $X$.
Then there is a \emph{logarithmic stratification} $\cS$ of $X$ with respect to $D$.
It is made of smooth connected immersed submanifolds of $X$ satisfying the frontier condition (see \cite[(3.2)]{Sai80}).
For each $p\in S\in\cS$, $\sDer(-\log D)(p)$ generates the tangent space $T_pS$.
If $S$ is locally finite, then $D$ is called a \emph{holonomic divisor} (see \cite[(3.8)]{Sai80}).
Complex hyperplane arrangements are holonomic divisors with logarithmic stratification consisting of relative complements of flats.

%%%%%%%%%%%%%%%%%%%%%%%%%%%%%%%%%%%%%%%%%%%%%%%%%%%%%%%%%%%%%%%%%%%%%%%%%%%%%%%%

\begin{thm}\label{0}
Let $D\subseteq(\CC^n,0)=X$ be a reduced hypersurface singularity defined by $f\in\O_X$ with module of logarithmic derivations $\Der(-\log D)$.

\begin{asparaenum}[(a)]

\item\label{0a} Let $\Gamma\subseteq\mm_X$ be a finite $\CC$-vector space such that $V(\Gamma)=\{0\}$.
If $D$ is holonomic and $\gamma\in\Gamma$ is generic, then $V(\Der(-\log D)(\gamma))=\set{0}$.
In particular, there is an $\O_X$-submodule $\Theta\subseteq\Der(-\log D)$ such that
\[
A_\gamma=\O_X/\Theta(\gamma)
\]
is a complete intersection Artin algebra.

\item\label{0b} Let $\gamma\in\mm_X^2$ with Jacobian ideal $J_\gamma=\Der_\CC(\O_X)(\gamma)$ and $\Theta\subseteq\Der(-\log D)$ an $\O_X$-submodule.
Suppose that $\gamma$ has an isolated critical point and $A_\gamma$ is a complete intersection Artin algebra.
Then $D$ is a free divisor if and only if $f\cdot J_\gamma\subseteq\Theta(\gamma)$, which holds true if $f\cdot\Der_\CC(\O_X)\subseteq\Theta$ or even $\Theta=\Der(-\log D)$. 
In this case the preimages under $\Theta\to\Theta(\gamma)$ of a regular sequence generating $\Theta(\gamma)$ form a basis of $\Der(-\log D)$.

\end{asparaenum}
\end{thm}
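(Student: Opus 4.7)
For part (a) I would combine the stratification theory of holonomic divisors with a Bertini--Sard genericity argument. Since $\sDer(-\log D)(p) = T_pS$ for each point $p$ in a logarithmic stratum $S$, the vanishing locus of $\Der(-\log D)(\gamma)$ at $p$ is exactly the critical locus of $\gamma|_S$. Holonomy of $D$ implies that only finitely many strata meet a representative of $(X,0)$, and for each such $S$ the subset of $\Gamma$ for which $\gamma|_S$ admits a critical point in $S \setminus \{0\}$ is proper Zariski-closed in $\Gamma$: one runs an incidence-variety dimension count on $\Gamma \times S$, using $V(\Gamma) = \{0\}$ to ensure $\gamma$ does not vanish identically on $S$. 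Intersecting these finitely many Zariski-open conditions gives a dense open $U \subseteq \Gamma$ on which $V(\Der(-\log D)(\gamma)) = \{0\}$, so $\Der(-\log D)(\gamma)$ is $\mm_X$-primary. A prime-avoidance argument then extracts $\theta_1, \ldots, \theta_n \in \Der(-\log D)$ whose images still generate an $\mm_X$-primary ideal; setting $\Theta = \sum \O_X \theta_i$, these $n = \dim \O_X$ elements automatically form a regular sequence in the regular local ring $\O_X$, so $A_\gamma$ is complete intersection Artin.

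For part (b) I would choose $\theta_1, \ldots, \theta_n \in \Theta$ so that $\theta_i(\gamma)$ is a regular sequence generating $\Theta(\gamma)$ (such a choice exists by the CI Artin hypothesis). The Saito matrix $A = (\theta_i(x_j))$ satisfies $\theta(\gamma) = A\,\nabla\gamma$ by the chain rule. Since each $\theta_i \in \Der(-\log D)$ gives $\theta_i(f) \in (f)$, the entries of $A\,\nabla f$ lie in $(f)$; evaluating on the smooth locus of $D$, where $\nabla f$ is non-zero, forces $\det A$ to vanish on $D_\reg$, so $\det A = hf$ for some $h \in \O_X$ by reducedness of $D$. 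Multiplying $A\,\nabla\gamma = \theta(\gamma)$ by the adjugate matrix yields $hf \cdot \nabla\gamma \in \Theta(\gamma)^n$, hence the unconditional inclusion $hf \cdot J_\gamma \subseteq \Theta(\gamma)$. Saito's criterion now says precisely that $\theta_1, \ldots, \theta_n$ form a basis of $\Der(-\log D)$ --- which makes $D$ free and simultaneously proves the final basis statement --- exactly when $h$ is a unit. So the "iff" will follow once I show that $f J_\gamma \subseteq \Theta(\gamma)$ is equivalent to $h$ being a unit.

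For that equivalence I plan to use a Wiebe-type colon identity for linked complete intersections: since $\Theta(\gamma)$ and $J_\gamma$ are generated by regular sequences of length $n$ related by $A$, one has $\Theta(\gamma) : J_\gamma = \Theta(\gamma) + (\det A) = \Theta(\gamma) + (hf)$. Consequently $f J_\gamma \subseteq \Theta(\gamma)$ becomes $f(1 - ch) \in \Theta(\gamma)$ for some $c \in \O_X$, which (outside the degenerate situation $f \in \Theta(\gamma)$) forces $1 - ch \in \mm_X$ and hence $h$ a unit; the reverse direction is immediate from $hf J_\gamma \subseteq \Theta(\gamma)$. The two special assumptions highlighted in the theorem both produce $f \partial_j \in \Theta$, so $f \partial_j \gamma = (f \partial_j)(\gamma) \in \Theta(\gamma)$ for free. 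I expect the colon identity $\Theta(\gamma) : J_\gamma = \Theta(\gamma) + (\det A)$ to be the technical heart of the argument, requiring a Koszul/linkage computation for length-$n$ regular sequences; once it is in hand, the rest of (b) is a direct combination of the adjugate identity with Saito's criterion.
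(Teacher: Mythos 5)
Your part (b) is essentially the paper's own proof. The identity $\theta_j(\gamma)=\sum_i\theta_j(x_i)\,\partial_i\gamma$ exhibits the Saito determinant as the transition determinant between the two regular sequences $\partial_1\gamma,\dots,\partial_n\gamma$ and a regular sequence generating $\Theta(\gamma)$, and the colon identity you single out as the technical heart is exactly Wiebe's theorem (cited in the paper as \cite[E.21]{Kun86}), so it can simply be quoted rather than re-derived by a Koszul/linkage computation. One small omission: to dispose of your ``degenerate situation'' $f\in\Theta(\gamma)$ you need the other half of Wiebe's statement, $\Ann_{A_\gamma}(\overline{\det A})=\bar J_\gamma$; if $f\in\Theta(\gamma)$ then $\overline{\det A}=\overline{hf}=0$, which would force $\bar J_\gamma=A_\gamma$, impossible since $J_\gamma\subseteq\mm_X$. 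With that observation the degenerate case never occurs and your chain of equivalences closes.

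Part (a) is where the genuine gap lies. Your reduction --- finitely many strata by holonomy, and $V(\Der(-\log D)(\gamma))\cap S=\mathrm{Crit}(\gamma|_S)$ because $\sDer(-\log D)(p)=T_pS$ --- is the same as the paper's. But the per-stratum genericity does not follow from an incidence-variety dimension count using only $V(\Gamma)=\{0\}$. The fibre of the incidence variety over $p\in S\setminus\{0\}$ is the annihilator in $\Gamma$ of the image of $d e_p$, where $e\colon S\setminus\{0\}\to\Gamma^\vee$ is the evaluation map; its codimension is $\operatorname{rank}(de_p)$, not $\dim S$, and the hypothesis $V(\Gamma)=\{0\}$ controls only the base locus, not the degeneracy loci $\{p:\operatorname{rank}(de_p)\le k\}$ away from the origin (already for $\Gamma=\ideal{x^2,y^2}_\CC$ the rank drops to $1$ along the punctured axes). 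Even where the count does close it yields only finitely many critical points for generic $\gamma$; and, more seriously, since $S\setminus\{0\}$ is not proper over the germ, the projection of the incidence variety to $\Gamma$ need not be Zariski closed or even constructible --- critical points can escape to the origin, and the punctured neighbourhood on which $\gamma|_S$ is critical-point-free depends on $\gamma$. This is precisely the difficulty that Flenner's local Bertini theorem resolves, and it is what the paper uses (Lemma~\ref{1}, Propositions~\ref{2} and~\ref{4}): for generic $\gamma$ the differential $d_{A/k}(\gamma)$ is a basic element of the module of differentials at every regular prime of $D(\Gamma)$. Substituting that statement for your dimension count repairs the argument; the remaining steps (intersecting finitely many generic conditions, prime avoidance, and a system of parameters being a regular sequence) agree with the paper.
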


Part~\eqref{0b} of Theorem~\ref{0} is proved in \S\ref{12}, part~\eqref{0a} at the end of \S\ref{15}.

%%%%%%%%%%%%%%%%%%%%%%%%%%%%%%%%%%%%%%%%%%%%%%%%%%%%%%%%%%%%%%%%%%%%%%%%%%%%%%%%

\begin{rmk}\label{24}\
\begin{enumerate}[(a)]

\item\label{24a} If $\gamma$ is a generic quadratic form, then $\Ann_{A_\gamma}(\bar J_\gamma)=\soc(A_\gamma)$.

\item\label{24b}
By finite determinacy any $\gamma\in\mm_X^2$ with an isolated critical point is polynomial in terms of suitable coordinates.
If $\Gamma\subseteq\ideal{x_1,\dots,x_n}^2$ is polynomial, then the set of $\gamma\in\Gamma$ with isolated critical point is Zariski open.
We sketch a proof at the end of \S\ref{15}.

\end{enumerate}
\end{rmk}

%%%%%%%%%%%%%%%%%%%%%%%%%%%%%%%%%%%%%%%%%%%%%%%%%%%%%%%%%%%%%%%%%%%%%%%%%%%%%%%%

\begin{rmk}
Abe, Maeno, Murai and Numata established independently a version of Theorem~\ref{0} for central hyperplane arrangements (see \cite[Thm.~1.10]{AMMN18}).
The counterpart $\eta$ of $\gamma$ is a homogeneous form.
The \emph{Solomon--Terao algebra} $\ST(\cA,\eta)$ of a central arrangement $\cA$ plays the role of $A_\gamma$ in case $\Theta=\Der(-\log D)$.
\end{rmk}

%%%%%%%%%%%%%%%%%%%%%%%%%%%%%%%%%%%%%%%%%%%%%%%%%%%%%%%%%%%%%%%%%%%%%%%%%%%%%%%%
\section{Saito's criterion and Wiebe's theorem}\label{12}
%%%%%%%%%%%%%%%%%%%%%%%%%%%%%%%%%%%%%%%%%%%%%%%%%%%%%%%%%%%%%%%%%%%%%%%%%%%%%%%%

Part \eqref{0b} of Theorem~\ref{0} results from a simple combination of Wiebe's theorem and Saito's criterion. 

%%%%%%%%%%%%%%%%%%%%%%%%%%%%%%%%%%%%%%%%%%%%%%%%%%%%%%%%%%%%%%%%%%%%%%%%%%%%%%%%

\begin{proof}[Proof of Theorem~\ref{0}.\eqref{0b}]
Pick coordinates $x_1,\dots,x_n\in\O_X$ and corresponding partial derivatives $\partial_i=\frac{\partial}{\partial x_i}$ for $i=1,\dots,n$.
By hypothesis $g_i=\partial_i(\gamma)$ defines an $\O_X$-sequence $g_1,\dots,g_n\in\mm_X$ and $\Theta(\gamma)=\ideal{f_1,\dots,f_n}$ for some $\O_X$-sequence $f_1,\dots,f_n\in\mm_X$.
For $j=1,\dots,n$, pick $\delta_j\in\Theta$ such that $\delta_j(\gamma)=f_j$.
Any $\delta\in\Der_\CC(\O_X)$ can be written as $\delta=\sum_{i=1}^n\delta(x_i)\partial_i$.
It follows that
\begin{equation}\label{23}
f_j=\delta_j(\gamma)=\sum_{i=1}^n\delta_j(x_i)\partial_i(\gamma)=\sum_{i=1}^n\delta_j(x_i)g_i.
\end{equation}
The Saito determinant $\Delta=\det(\delta_j(x_i))$ is the transition determinant from $g_1,\dots,g_n$ to $f_1,\dots,f_n$.
By Wiebe's theorem (see \cite[E.21]{Kun86})
\[
\ideal{\bar\Delta}=\Ann_{A_\gamma}(\bar J_\gamma)\quad\text{and}\quad
\bar J_\gamma=\Ann_{A_\gamma}(\bar\Delta).
\]
In particular, $\bar\Delta\ne0$ since $J_\gamma\subseteq\mm_X\supseteq\Theta(\gamma)$. 
By reducedness of $D$, $\Delta\in\ideal{f}$ (see \cite[(1.5)]{Sai80}).
It follows that
\[
f\cdot J_\gamma\subseteq\Theta(\gamma)
\iff\bar f\in\Ann_{A_\gamma}(\bar J_\gamma)
\iff\ideal{\Delta}=\ideal{f}
\]
which is in turn equivalent to freeness of $D$ by Saito's criterion (see \cite[(1.8)]{Sai80}).
Moreover $\delta_1,\dots,\delta_n$ is a basis of $\Der(-\log D)$ in this case.
If $f\cdot\Der_\CC(\O_X)\subseteq\Theta$, then $f\cdot J_\gamma\subseteq\Theta(\gamma)$ by applying to $\gamma$.
Note that $f\cdot\Der_\CC(\O_X)\subseteq\Der(-\log D)$ to finish the proof.
\end{proof}

%%%%%%%%%%%%%%%%%%%%%%%%%%%%%%%%%%%%%%%%%%%%%%%%%%%%%%%%%%%%%%%%%%%%%%%%%%%%%%%%

\begin{rmk}
In the setup of Theorem~\ref{0}.\eqref{0b}, suppose that $\gamma$ is homogeneous in terms of the coordinates $x_1,\dots,x_n\in\O_X$.
Applying the Euler identity to $g_1,\dots,g_n$ in \eqref{23} shows that the transition determinant from $x_1,\dots,x_n$ to $f_1,\dots,f_n$ is $\Delta\cdot H_\gamma$ where $H_\gamma$ is the Hessian determinant of $\gamma$.
Then again by Wiebe's theorem
\begin{align*}
\ideal{\Delta}=\ideal{f}
&\iff\ideal{\Delta\cdot H_\gamma}=\ideal{f\cdot H_\gamma}\\
&\iff\overline{f\cdot H_\gamma}\in\Ann_{A_\gamma}(\bar\mm_X)=\soc(A_\gamma).
\end{align*}
\end{rmk}

%%%%%%%%%%%%%%%%%%%%%%%%%%%%%%%%%%%%%%%%%%%%%%%%%%%%%%%%%%%%%%%%%%%%%%%%%%%%%%%%

\begin{exa}
Consider the Whitney umbrella $D\subseteq(\CC^3,0)$ defined by $f=x^2-y^2z$.
A \textsc{Singular} (see \cite{DGPS18}) calculation shows that $\Theta=\Der(-\log D)$ is minimally generated by
\[
\begin{pmatrix}
\delta_1\\
\delta_2\\
\delta_3\\
\delta_4
\end{pmatrix}
=
\begin{pmatrix}
x & 0 & 2z\\
y^2 & 0 & 2x\\
yz & x & 0\\
0 &y & -2z
\end{pmatrix}
\begin{pmatrix}
\p_x\\
\p_y\\
\p_z
\end{pmatrix}
\]
In particular, $D$ is not free.
For generic $\gamma=ax^2+by^2+cz^2$,
\[
\Theta(\gamma)=\ideal{xy,ax^2+2cz^2,by^2-2cz^2,2cxz+axy^2}
\]
and $A_\gamma$ is a non-complete intersection Artin algebra.
Note that 
\[
\bar f=-\frac{2c}a\bar z^2\in\ideal{\bar y\bar z,\bar z^2}=\soc(A_\gamma)
\]
and in particular $f\cdot J_\gamma\subseteq\Theta(\gamma)$.
\end{exa}

%%%%%%%%%%%%%%%%%%%%%%%%%%%%%%%%%%%%%%%%%%%%%%%%%%%%%%%%%%%%%%%%%%%%%%%%%%%%%%%%
\section{Flenner's Bertini theorem}\label{13}
%%%%%%%%%%%%%%%%%%%%%%%%%%%%%%%%%%%%%%%%%%%%%%%%%%%%%%%%%%%%%%%%%%%%%%%%%%%%%%%%

Let $A=(A,\mm_A)$ be a local Noetherian $k$-algebra.
Without explicit mention we shall assume that $\ch(k)=0$ and that $A$ admits a universally finite $k$-derivation $d_{A/k}\colon A\to\D_k(A)$. 
For any factor $k$-algebra $\bar A=A/I$ there is an induced universally finite $k$-derivation $d_{\bar A/k}\colon \bar A\to\D_k(\bar A)=\D_k(A)/\ideal{d_{A/k}(I)}$ (see \cite[\nopp 11.10]{Kun86}).

%%%%%%%%%%%%%%%%%%%%%%%%%%%%%%%%%%%%%%%%%%%%%%%%%%%%%%%%%%%%%%%%%%%%%%%%%%%%%%%%

\begin{rmk}\label{7}
For a surjection $\Gamma\onto\Gamma'$ of finite dimensional $k$-vector spaces genericity holds equivalently for $\gamma\in\Gamma$ or for $\gamma\in\Gamma'$.
Indeed the map identifies with a dominant morphism $\AA^n_k\to\AA^{n'}_k$.
\end{rmk}

%%%%%%%%%%%%%%%%%%%%%%%%%%%%%%%%%%%%%%%%%%%%%%%%%%%%%%%%%%%%%%%%%%%%%%%%%%%%%%%%

The following is a direct application of Flenner's Bertini theorem.

%%%%%%%%%%%%%%%%%%%%%%%%%%%%%%%%%%%%%%%%%%%%%%%%%%%%%%%%%%%%%%%%%%%%%%%%%%%%%%%%

\begin{lem}\label{1}
Let $A$ be a local Noetherian $k$-algebra, $\Gamma\to\mm_A$ a finite dimensional $k$-vector space and $\Theta\subseteq\Der_k(A)$ an $A$-submodule.
Assume that $A$ is regular on $U\subseteq D(\Gamma)$ and that $\Theta$ generates $\Der_k(A)$ on $U$.
Then $U\subseteq D(\Theta(\gamma))$ for generic $\gamma\in\Gamma$.
\end{lem}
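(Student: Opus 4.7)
The plan is to reduce the assertion to a statement about the Jacobian ideal $J_\gamma := \Der_k(A)(\gamma)$ of $\gamma$ itself and then to invoke Flenner's Bertini theorem directly.

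For the reduction, I would exploit the hypothesis that $\Theta$ generates $\Der_k(A)$ on $U$: this means $\Theta_\pp = \Der_k(A)_\pp$ as $A_\pp$-submodules for every $\pp\in U$, and evaluating at $\gamma$ gives the equality of ideals $\Theta(\gamma)A_\pp = J_\gamma A_\pp$. Passing to containment in $\pp A_\pp$ yields $V(\Theta(\gamma))\cap U = V(J_\gamma)\cap U$, so it suffices to prove $V(J_\gamma)\cap U=\emptyset$ for generic $\gamma\in\Gamma$.

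For the second step I would apply Flenner's Bertini theorem, available because $\ch(k)=0$: for generic $\gamma\in\Gamma$, the critical locus satisfies $V(J_\gamma)\setminus V(\Gamma)\subseteq\Sing(A)$. The standing hypotheses $U\subseteq D(\Gamma)$ and the regularity of $A$ on $U$ give $U\cap V(\Gamma)=\emptyset$ and $U\cap\Sing(A)=\emptyset$, so $V(J_\gamma)\cap U=\emptyset$; combined with the first step this is the required inclusion $U\subseteq D(\Theta(\gamma))$.

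The main subtlety will be selecting the appropriate formulation of Flenner's Bertini. The more familiar version, about regularity of the generic hypersurface $A/\gamma A$ along $U$, directly controls $J_\gamma$ via the Jacobian criterion only at primes $\pp\in U$ containing $\gamma$; at primes $\pp\in U$ with $\gamma\notin\pp$ one additionally needs the characteristic-zero observation that $J_\gamma\subseteq\pp$ together with $\gamma\in\mm_A$ forces $\bar\gamma\in A/\pp$ to be annihilated by all $k$-derivations and hence contained in $k$, which combined with $\gamma\in\mm_A$ gives $\gamma\in\pp$, a contradiction. The Jacobian-ideal form of Flenner's theorem packages both cases uniformly and bypasses this bookkeeping.
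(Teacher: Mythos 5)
Your proposal is correct and follows essentially the same route as the paper: localize at a prime $\pp\in U$, use the hypothesis that $\Theta$ generates $\Der_k(A)$ there to replace $\Theta(\gamma)$ by the full Jacobian ideal, and invoke Flenner's local Bertini theorem on $D(\Gamma)$. The paper states Flenner's conclusion as ``$d_{A/k}(\gamma)$ is basic for $\D_k(A)$ on $D(\Gamma)$'' and converts this into non-vanishing of some $\delta_j(\gamma)$ via the dual basis of the free module $\D_k(A)_\pp$ --- exactly the duality between $\Der_k(A)_\pp$ and $\D_k(A)_\pp$ at regular primes that makes your Jacobian-ideal formulation equivalent to the basic-element one, so the ``bookkeeping'' you hope to bypass is precisely the paper's short dual-basis computation.
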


\begin{proof}
Let $\pp\in U$.
By hypothesis $A_\pp$ is regular and hence $\D_k(A)_\pp$ is a free $A_\pp$-module (see \cite[(8.7)]{SS72}).
In particular, its dual 
\[
\xymatrix{
\Hom_{A_\pp}(\D_k(A)_\pp,A_\pp)=\Hom_A(\D_k(A),A)_\pp
\ar[r]^-{-\circ d_{A/k}}_-\cong & \Der_k(A)_\pp
}
\]
is free of finite rank.
By hypothesis there are $\delta_1,\dots,\delta_n\in\Theta$ mapping to a basis of $\Der_k(A)_\pp$.
Let $\omega_1,\dots,\omega_n\in\D_k(A)_\pp$ be the dual basis.
By Flenner's local Bertini theorem (see \cite[(4.5), (4.6)]{Fle77} and Remark~\ref{7}) $d_{A/k}(\gamma)$ is basic for $\D_k(A)$ on $D(\Gamma)$ for generic $\gamma\in\Gamma$.
It follows that
\[
\sum_{i=1}^n\delta_i(\gamma)\cdot\omega_i=d_{A/k}(\gamma)\in\D_k(A)_\pp\setminus\pp\D_k(A)_\pp.
\]
Then $\delta_j(\gamma)\not\in\pp^{(1)}=\pp$ for some $j\in\set{1,\dots,n}$ and hence $\pp\in D(\Theta(\gamma))$. 
\end{proof}

%%%%%%%%%%%%%%%%%%%%%%%%%%%%%%%%%%%%%%%%%%%%%%%%%%%%%%%%%%%%%%%%%%%%%%%%%%%%%%%%

We extend the above result to filtrations by closed subschemes.

%%%%%%%%%%%%%%%%%%%%%%%%%%%%%%%%%%%%%%%%%%%%%%%%%%%%%%%%%%%%%%%%%%%%%%%%%%%%%%%%

\begin{prp}\label{2}
Let $A$ be a local Noetherian $k$-algebra, $\Gamma\to\mm_A$ a finite dimensional $k$-vector space and $\Theta\subseteq\Der_k(A)$ an $A$-submodule.
Consider a chain of closed subschemes
\[
\Spec(A)=X_0\supseteq X_1\supseteq\dots\supseteq X_s\supseteq X_{s+1}=V(\Gamma).
\]
Write $X_i=\Spec(A_i)$ where $A_i=A/I_i$. 
For $i=0,\dots,s$ assume that $U_i=X_i\setminus X_{i+1}$ is regular and that $\Theta$ induces $\Theta_i\subseteq\Der_k(A_i)$ which generates $\Der_k(A_i)$ on $U_i$.
Then $V(\Theta(\gamma))\subseteq V(\Gamma)$ for generic $\gamma\in\Gamma$.
\end{prp}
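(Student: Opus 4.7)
The plan is to stratify $\Spec(A)\setminus V(\Gamma)$ into the locally closed pieces $U_i$, apply Lemma~\ref{1} separately on each stratum, and intersect the finitely many resulting genericity conditions on $\Gamma$.

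The first step is to record the telescoping decomposition
\[
\Spec(A)\setminus V(\Gamma)=X_0\setminus X_{s+1}=\bigsqcup_{i=0}^{s}U_i,
\]
which follows from the descending chain of closed subschemes: any prime $\pp$ not containing $\Gamma$ drops out of the chain at some stage, and the largest $i$ with $\pp\in X_i$ places $\pp$ in exactly one $U_i$.

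For the second step I would, for each $i\in\set{0,\dots,s}$, apply Lemma~\ref{1} to the local Noetherian $k$-algebra $A_i=A/I_i$, the composite map $\Gamma\to\mm_A\onto\mm_{A_i}$, the submodule $\Theta_i\subseteq\Der_k(A_i)$, and the open subset $U_i\subseteq X_i$. The three hypotheses transfer directly: regularity of $A_i$ on $U_i$ and generation of $\Der_k(A_i)$ on $U_i$ by $\Theta_i$ are assumed, while the inclusion $U_i\subseteq D(\Gamma)$ inside $X_i$ follows from $V(\Gamma)=X_{s+1}\subseteq X_{i+1}$. Combined with Remark~\ref{7}, which moves genericity across the surjection $\Gamma\onto\mathrm{image}(\Gamma)$ in $\mm_{A_i}$, the lemma produces a non-empty Zariski open $\Omega_i\subseteq\Gamma$ such that $U_i\subseteq D(\Theta_i(\bar\gamma))$ for every $\gamma\in\Omega_i$, where $\bar\gamma$ denotes the image of $\gamma$ in $A_i$.

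Finally, I would set $\Omega=\bigcap_{i=0}^{s}\Omega_i$, still a non-empty Zariski open in $\Gamma$, and argue as follows: given $\gamma\in\Omega$ and $\pp\in\Spec(A)\setminus V(\Gamma)$, pick the unique $i$ with $\pp\in U_i$; because $\Theta_i$ is induced by $\Theta$, the statement $\Theta_i(\bar\gamma)\not\subseteq\bar\pp$ of Lemma~\ref{1} lifts, together with $I_i\subseteq\pp$, to $\Theta(\gamma)\not\subseteq\pp$ in $A$, so $\pp\notin V(\Theta(\gamma))$. Hence $V(\Theta(\gamma))\subseteq V(\Gamma)$ for $\gamma\in\Omega$, which is the desired conclusion. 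The main obstacle I anticipate is purely organizational: the symbols $V$, $D$, $\Der$ must be tracked carefully as they travel between $\Spec(A)$ and each $X_i$, and one must verify that the stratum-wise genericity assembles into a single genericity statement on $\Gamma$. The latter step is automatic, since a finite intersection of non-empty Zariski opens in the affine space $\Gamma$ is again non-empty, so no machinery beyond Lemma~\ref{1} enters.
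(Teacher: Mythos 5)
Your proposal is correct and follows essentially the same route as the paper: apply Lemma~\ref{1} stratum by stratum to $(A_i,U_i,\Theta_i)$ and combine the finitely many genericity conditions. The paper merely packages the same argument as a telescoping chain of inclusions $V(\Theta(\gamma),I_i)\subseteq V(\Theta(\gamma),I_{i+1})$ rather than as a disjoint decomposition of $\Spec(A)\setminus V(\Gamma)$, which is an equivalent bookkeeping choice.
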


\begin{proof}
Applying Lemma~\ref{1} to $A=A_i$, $U=U_i$, $\Theta=\Theta_i$ yields
\begin{align*}
V(\Theta(\gamma)\mod I_i)&=V(\Theta_i(\gamma\mod I_i))\\
&=X_i\setminus D(\Theta_i(\gamma\mod I_i))
\subseteq X_i\setminus U_i=V(I_{i+1}/I_i).
\end{align*}
It follows that $V(\Theta(\gamma),I_i)\subseteq V(\Theta(\gamma),I_{i+1})$ for $i=0,\dots,s$ and hence 
\[
V(\Theta(\gamma))=V(\Theta(\gamma),I_0)\subseteq\cdots\subseteq V(\Theta(\gamma),I_{s+1})\subseteq V(\Gamma).
\]
for generic $\gamma\in\Gamma$.
\end{proof}

%%%%%%%%%%%%%%%%%%%%%%%%%%%%%%%%%%%%%%%%%%%%%%%%%%%%%%%%%%%%%%%%%%%%%%%%%%%%%%%%
\section{Analytic germs}\label{14}
%%%%%%%%%%%%%%%%%%%%%%%%%%%%%%%%%%%%%%%%%%%%%%%%%%%%%%%%%%%%%%%%%%%%%%%%%%%%%%%%

Assume now that $k$ is algebraically closed and complete non-discretely valued.
Any analytic $k$-algebra admits a universally finite $k$-derivation (see \cite[(2.6)]{SS72}).

Let $X$ be a $k$-analytic space.
For any $x\in X$ analytic subgerms of $(X,x)$ correspond to subschemes of $\Spec(\O_{X,x})$ (see \cite[\S1, Thm.~1.3]{Gro62}).

For two coherent $\O_X$-ideals $\cI$ and $\cJ$, an inclusion of zeros $X\supseteq V(\cI)\supseteq V(\cJ)$ is equivalent to an inclusion of prime ideals $X_x=\Spec(\O_{X,x})\supseteq V(\cI_x)\supseteq V(\cJ_x)$ for all $x\in X$ by the analytic Nullstellensatz (see \cite[\S4,  Cor.~1]{Hou62a}).
Consider $U=X\setminus V(\cI)\subseteq X$ for some coherent $\O_X$-ideal $\cI$ and set $U_x=D(\cI_x)\subseteq X_x$ for any $x\in X$.

Since regularity is an analytic property (see \cite[III.(4.6)]{Sch70}) there is a coherent $\O_X$-ideal $\cJ$ such that $\Sing(X)=V(\cJ)$ and $\Sing(X_x)=V(\cJ_x)$ for all $x\in X$.
Thus $U$ is smooth if and only if $U_x$ is regular for all $x\in X$.

Consider an inclusion of coherent $\O_X$-modules $\cG\subseteq\cF$.
Then $\cG=\cF$ on $U$ is equivalent to $V(\sAnn(\cM))=\Supp(\cM)\subseteq V(\cI)$ where $\cM=\cF/\cG$.
This in turn is equivalent to 
\[
\Supp(\cM_x)=V(\Ann(\cM_x))=V(\sAnn(\cM)_x)\subseteq V(\cI_x)
\]
and hence to $\cG_x=\cF_x$ on $U_x$ for all $x\in X$ (see \cite[B.\S1]{Hou62b}).
For any $\O_{X,x}$-submodule $\Theta\subseteq\Der_k(\O_{X,x})$ with generators $\theta_1,\dots,\theta_m\in\cF(U)$ where $x\in U\subseteq X$ this applies to the coherent $\O_U$-module $\cF=\sDer_k(\O_U)$ with stalk $\sDer_k(\O_U)_x=\Der_k(\O_{X,x})$ and its coherent $\O_U$-submodule $\cG=\ideal{\theta_1,\dots,\theta_m}_{\O_U}$.

%%%%%%%%%%%%%%%%%%%%%%%%%%%%%%%%%%%%%%%%%%%%%%%%%%%%%%%%%%%%%%%%%%%%%%%%%%%%%%%%

Proposition~\ref{2} translates into the following local analytic statement by passing to representatives and using the preceding discussion.

%%%%%%%%%%%%%%%%%%%%%%%%%%%%%%%%%%%%%%%%%%%%%%%%%%%%%%%%%%%%%%%%%%%%%%%%%%%%%%%%

\begin{prp}\label{4}
Let $X$ be the germ of a $k$-analytic space, $\Gamma\to\mm_X$ a finite dimensional $k$-vector space and $\Theta\subseteq\Der_k(\O_X)$ an $\O_X$-submodule.
Consider a chain of closed analytic subgerms
\[
X=X_0\supseteq X_1\supseteq\cdots\supseteq X_s\supseteq X_{s+1}=V(\Gamma).
\]
For $i=0,\dots,s$ assume that $U_i=X_i\setminus X_{i+1}$ is smooth and that $\Theta$ induces $\Theta_i\subseteq\Der_k(\O_{X_i})$ which generates $\sDer_k(\O_{U_i})$.
Then $V(\Theta(\gamma))\subseteq V(\Gamma)$ for generic $\gamma\in\Gamma$.\qed
\end{prp}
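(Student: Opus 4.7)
The plan is to reduce Proposition~\ref{4} directly to the scheme-theoretic Proposition~\ref{2} applied with $A=\O_X$, invoking the analytic-algebraic dictionary set up in the preceding discussion. Concretely, I would pick a representative of the germ—a $k$-analytic space together with a point $x_0$ such that $\O_{X,x_0}$ is the prescribed local ring—and choose coherent $\O_X$-ideals $\cI_i$ with stalks $I_i:=(\cI_i)_{x_0}$, so that the closed analytic subgerms $X_i$ correspond, via \cite[\S1,~Thm.~1.3]{Gro62}, to the closed subschemes $\Spec(A/I_i)\subseteq\Spec(A)$. The submodule $\Theta$ is already algebraic and induces $\Theta_i\subseteq\Der_k(A/I_i)$ exactly as in Proposition~\ref{2}; the existence of a universally finite derivation on $A$ comes from \cite[(2.6)]{SS72}.

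Next, I would translate the two pointwise hypotheses by the preceding discussion. Smoothness of the analytic open $U_i=X_i\setminus X_{i+1}$ is equivalent, via the coherent Jacobian ideal identified there and the analytic Nullstellensatz, to regularity of the open subscheme $U_i\subseteq\Spec(A/I_i)$, since both conditions amount to the inclusion of the singular ideal into the complementary ideal and this can be checked on the stalk at $x_0$. Similarly, applying the discussion of inclusions of coherent submodules to $\ideal{\Theta_i}_{\O_{U_i}}\subseteq\sDer_k(\O_{U_i})$, the analytic statement that $\Theta_i$ generates $\sDer_k(\O_{U_i})$ is equivalent to the scheme-theoretic statement that $\Theta_i$ generates $\Der_k(A/I_i)$ on the scheme $U_i$.

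With the hypotheses translated, Proposition~\ref{2} supplies, for generic $\gamma\in\Gamma$, the scheme-theoretic inclusion $V(\Theta(\gamma))\subseteq V(\Gamma)$ inside $\Spec(A)$, which by the Grothendieck correspondence is exactly the desired inclusion of analytic subgerms. I do not anticipate a substantive obstacle: the preceding discussion is engineered to make this translation mechanical, and the only care needed is to verify that the stalkwise dictionary is applied consistently across the finitely many indices $i$ and that the inclusion produced by Proposition~\ref{2} is transported back along the same correspondence.
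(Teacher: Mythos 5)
Your proposal is correct and follows exactly the route the paper intends: the paper states Proposition~\ref{4} with an immediate \qed precisely because it is obtained from Proposition~\ref{2} by passing to a representative and applying the analytic--algebraic dictionary (Grothendieck's correspondence for subgerms, the analytic Nullstellensatz for inclusions of zero sets, the coherent singular ideal for smoothness versus regularity, and the support argument for generation of $\sDer_k(\O_{U_i})$) set up in \S\ref{14}. Your translation of the hypotheses and transport of the conclusion matches that discussion step for step.
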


%%%%%%%%%%%%%%%%%%%%%%%%%%%%%%%%%%%%%%%%%%%%%%%%%%%%%%%%%%%%%%%%%%%%%%%%%%%%%%%%
\section{Holonomic divisors}\label{15}
%%%%%%%%%%%%%%%%%%%%%%%%%%%%%%%%%%%%%%%%%%%%%%%%%%%%%%%%%%%%%%%%%%%%%%%%%%%%%%%%

Let $D\subseteq X=(\CC^n,0)$ be the germ of a divisor.
Then $X$ has a logarithmic stratification $\cS$ with respect to $D$.
The stratum $S_0\in\cS$ with $0\in S_0$ satisfies $S_0\cong(\CC^d,0)$ where $d=\dim_\CC\sDer(-\log D)(0)$.
Along $S_0$, $D\subseteq X$ is analytically trivial (see \cite[(3.6)]{Sai80}), that is,
\[
(X,D)\cong(X'\times S_0,D'\times S_0)
\]
where $D'\subseteq X'=(\CC^{n-d},0)$ is the germ of a divisor.
There are corresponding decompositions of derivations (see \cite[III.\S5.10]{GR71})
\begin{align*}
\sDer_\CC(\O_X)&=\pi_{X'}^*\sDer_\CC(\O_{X'})\oplus\pi_{S_0}^*\sDer_\CC(\O_{S_0}),\\
\sDer(-\log D)&=\pi_{X'}^*\sDer(-\log D')\oplus\pi_{S_0}^*\sDer_\CC(\O_{S_0}),
\end{align*}
where $\pi_{X'}$ and $\pi_{S_0}$ denote the canonical projections.
Denote by $\cS'$ the logarithmic stratification of $X'$ with respect to $D'$.
For $p=(p',p_0)\in S\in\cS$ and $p'\in S'\in\cS'$,
\begin{align*}
T_pS&=\sDer(-\log D)(p)\\
&=\sDer(-\log D')(p')\oplus\sDer_\CC(\O_{S_0})(p_0)
=T_{p'}S'\oplus T_{p_0}S_0.
\end{align*}
This yields a bijection of logarithmic strata
\[
\cS'\leftrightarrow\cS,\quad S'\mapsto S'\times S_0,\quad S\cap X'\mapsfrom S.
\]
In particular, $S_0'=S_0\cap X'=\set{0}\in\cS'$ plays the role of $S_0$ for $D'\subseteq X'$.
Then $\sDer(-\log D')(0)=T_0S_0'=0$ and hence $\sDer(-\log D)$ preserves the ideal of $S_0$.
The resulting canonical map
\[
\sDer(-\log D)\to\pi_{S_0}^*\sDer_\CC(\O_{S_0})
\]
induces a surjection
\begin{align*}
\sDer(-\log D)_0\onto\sDer(-\log D)(0)&=T_0S_0\\
&=\sDer_\CC(\O_{S_0})_0/\mm_{S_0,0}\sDer_\CC(\O_{S_0})_0.
\end{align*}
By Nakayama's lemma it follows that $\sDer(-\log D)_0$ induces generators of  $\sDer_\CC(\O_{S_0})_0$.

Assume now that $D$ (or equivalently $D'$) is holonomic.
Then $\cS$ and $\cS'$ are finite, $S_0'\in\cS'$ is the unique point stratum and hence $S_0\in\cS$ is the unique stratum with minimal dimension $d$.
Consider the closed analytic subgerms (see \cite[(3.12)]{Sai80})
\[
X_i=\set{p\in X\xmid\dim_\CC\sDer(-\log D)(p)\le n-i}\subseteq X,
\]
for $i=0,\dots,s=n-d$.
Note that $X_0\setminus X_1=X\setminus D$ is a stratum, $X_1\setminus X_2=D\setminus\Sing(D)$, and $X_s=S_0$.
More generally (see \cite[(3.13)]{Sai80})
\[
U_i=X_i\setminus X_{i+1}=\bigcup_{\substack{S\in\cS\\\dim S=n-i}}S
\]
for $i=0,\dots,s-1$.
For each $i\in\set{0,\dots,s-1}$ and $p\in U_i$, uniqueness of $S_p$ yields
\[
(U_i,p)=(S_p,p).
\]
By the preceding discussion applied at $p$, $\sDer(-\log D)_p$ induces generators of $\sDer_\CC(\O_{U_i})_p$.
With coherence of $\sDer(-\log D)$ it follows that 
\[
\Theta=\Der(-\log D)=\sDer(-\log D)_0
\]
induces generators of $\sDer_\CC(\O_{U_i})$.

%%%%%%%%%%%%%%%%%%%%%%%%%%%%%%%%%%%%%%%%%%%%%%%%%%%%%%%%%%%%%%%%%%%%%%%%%%%%%%%%

Part~\eqref{0a} of Theorem~\ref{0} now reduces to Proposition~\ref{4}.

%%%%%%%%%%%%%%%%%%%%%%%%%%%%%%%%%%%%%%%%%%%%%%%%%%%%%%%%%%%%%%%%%%%%%%%%%%%%%%%%

\begin{proof}[Proof of Theorem~\ref{0}.\eqref{0a}]
Due to the preceding discussion Proposition~\ref{4} applies with $\Theta=\Der(-\log D)$.
By choice of $\Gamma$ it yields
\[
V(\Der(-\log D)(\gamma))\subseteq V(\Gamma)=\set{0}.
\]
Then $\Theta$ is generated by preimages under $\Der(-\log D)\to\Der(-\log D)(\gamma)$ of a regular sequence (see \cite[Cor.~2.2.6, Thm.~2.1.2.(b)]{BH93}).
\end{proof}

%%%%%%%%%%%%%%%%%%%%%%%%%%%%%%%%%%%%%%%%%%%%%%%%%%%%%%%%%%%%%%%%%%%%%%%%%%%%%%%%

\begin{proof}[Proof of Remark~\ref{24}.\eqref{24b}]
Set $x=x_1,\dots,x_n$ and $\partial_i=\frac{\partial}{\partial x_i}$ for $i=1,\dots,n$.
The linear map $\Gamma\to\partial\Gamma=\bigoplus_{i=1}^n\partial_i\Gamma$ sends $\gamma$ with isolated critical point to a regular sequence $\partial\gamma=\partial_1\gamma,\dots,\partial_n\gamma\in\CC[x]_{\ideal{x}}$.
The ideal $\ideal{\omega_1,\dots,\omega_n}$ generated by the inclusions
\[
(\omega_i\colon\partial_i\Gamma\into\CC[x])\in\CC[x]\otimes(\partial_i\Gamma)^\vee\subseteq\CC[x]\otimes\CC[\partial\Gamma]
\]
defines a subscheme $W\subseteq\AA_\CC^n\times\partial\Gamma$, containing $\set{0}\times\partial\Gamma$ by hypothesis.
Since $\partial\gamma$ is a regular sequence, the fiber of $\varphi\colon W\to\partial\Gamma$ over $\partial\gamma$ is a zero-dimensional complete intersection at $(0,\partial\gamma)$.
By Chevalley's theorem (see \cite[Thm.~(13.1.3)]{EGA4}) the same holds at all points in an affine neighborhood $U$ of $(0,\partial\gamma)$.
Since $\partial\Gamma$ is regular it follows that $\omega_1,\dots,\omega_n$ is a regular sequence at all points in $U$.
In other words $\varphi\colon U\to\partial\Gamma$ is a relative complete intersection (see \cite[Def.~(19.3.6)]{EGA4}).
In particular, it is flat and open (see \cite[Thm.~(2.4.6)]{EGA4}).
It follows that $\varphi(U)$ is an open neighborhood of $\partial\gamma$ containing only regular sequences in $\CC[x]_{\ideal{x}}$.
This makes $\Gamma\cap\varphi(U)$ an open neighborhood of $\gamma$ consisting of polynomials with isolated critical point.
\end{proof}

%%%%%%%%%%%%%%%%%%%%%%%%%%%%%%%%%%%%%%%%%%%%%%%%%%%%%%%%%%%%%%%%%%%%%%%%%%%%%%%%

\begin{exa}\label{17}
Let $D\subseteq X=(\CC^3,0)$ be defined by $f=xy(x+y)(x-y)(y-xz)$ and $\Gamma=\ideal{x^2,y^2,z^2}_\CC$. 
Using the Jacobian criterion, one computes that 
\[
\Sing(D)=V(y,z)\cup V(x,y)=V(y,xz)\supseteq V(\Gamma)=\set{0}
\]
is the union of the $x$-axis and the $z$-axis. 
So $U=X\setminus V(y,xz)\subseteq X\setminus V(\Gamma)$ is smooth. 
A \textsc{Singular} (see \cite{DGPS18}) calculation shows that $\Theta=\Der(-\log D)$ is generated by
\[
\begin{pmatrix}
\delta_1\\
\delta_2\\
\delta_3
\end{pmatrix}
=
\begin{pmatrix}
x & y & 0\\
0 & x^2y-y^3 & 2xy-x^2z-3y^2z+2xyz^2\\
0 & 0 & y-xz
\end{pmatrix}
\begin{pmatrix}
\p_x\\
\p_y\\
\p_z
\end{pmatrix}
\]
making $D$ a free divisor.
On the punctured $x$-axis, $\delta_1=x\p_x$ is non-vanishing.
The $z$-axis however is a union of non-holonomic point strata since all $\delta_i$ vanish on it.
It follows that the $z$-axis is the non-holonomic locus of $D$.
For generic $\gamma=ax^2+by^2+cz^2\in\Gamma$,
\begin{align*}
\Theta(\gamma)=\langle
&2ax^2+2by^2,\\
&4cxyz+2bx^2y^2-2by^4-2cx^2z^2-6cy^2z^2+4cxyz^3,\\
&2cyz-2cxz^2
\rangle
\end{align*}
and $V(\Theta(\gamma))=V(x,y)\supsetneq V(\Gamma)$ is the non-holonomic locus of $D$.
\end{exa}

%%%%%%%%%%%%%%%%%%%%%%%%%%%%%%%%%%%%%%%%%%%%%%%%%%%%%%%%%%%%%%%%%%%%%%%%%%%%%%%%
%\printbibliography
\bibliographystyle{amsalpha}
\bibliography{berder-arxiv}
%%%%%%%%%%%%%%%%%%%%%%%%%%%%%%%%%%%%%%%%%%%%%%%%%%%%%%%%%%%%%%%%%%%%%%%%%%%%%%%%
\end{document}